%
%
%

\documentclass{amsart}

\usepackage{amsmath}
\usepackage{amscd}
\usepackage{amssymb}
 \usepackage{color}

\input xy
\xyoption{all}

\newcommand{\pd}{\partial}

\newcommand{\bP}{{\mathbb P}}
\newcommand{\bQ}{{\mathbb Q}}
\newcommand{\bR}{{\mathbb R}}

\newcommand{\bZ}{{\mathbb Z}}

\newcommand{\half}{\frac{1}{2}}

   \newcommand{\bw}{{\mathbf w}}
\newcommand{\bep}{{\mathbf e}}

\newtheorem{theorem/definition}{Theorem/Definition}[section]

\newtheorem{proposition}{Proposition}[section]

\newtheorem{Conjecture}{Conjecture}

\theoremstyle{remark}

\theoremstyle{definition}

\newcommand{\bml}{\begin{multline}}
\newcommand{\eml}{\end{multline}}

\newcommand{\bag}{\begin{align}}
\newcommand{\egn}{ \end{align}}

\newcommand{\be}{\begin{equation}}

\newcommand{\ee}{\end{equation}}
\newcommand{\bea}{\begin{eqnarray}}
\newcommand{\eea}{\end{eqnarray}}
\newcommand{\ben}{\begin{eqnarray*}}
\newcommand{\een}{\end{eqnarray*}}
\newcommand{\bet}{\begin{equation}
\begin{split}}
\newcommand{\eet}{\end{split}
\end{equation}}

\definecolor{yellow}{rgb}{1,1,0}
\definecolor{orange}{rgb}{1,.7,0}
\definecolor{red}{rgb}{1,0,0} \definecolor{blue}{rgb}{0,0,1}
\definecolor{white}{rgb}{1,1,1}

\definecolor{A}{rgb}{.75,1,.75}

\begin{document}

\title
{Integrality Properties of Variations of Mahler Measures}
\author{Jian Zhou}
\address{Department of Mathematical Sciences\\Tsinghua University\\Beijing, 100084, China}
\email{jzhou@math.tsinghua.edu.cn}

\begin{abstract}
We propose some conjectures on the integrality properties related to
the  variation of Mahler measures,
inspired by the results in the elliptic curve case by Rodriguez Villegas, Stienstra and Zagier.
\end{abstract}


\maketitle

In the study of mirror symmetry,
there are some amazing integrality results,
including the integrality of mirror maps (Lian-Yau integrality) \cite{Lia-Yau1, Lia-Yau2, Zud, KR1, KR2, KR3, Del, Zho}
and the integrality of instanton numbers (including Gopakumar-Vafa integrality for closed strings
and Ooguri-Vafa integrality for open strings in arbitrary genera),
see e.g. \cite{KSV, GV, OV, Pen, Kon}.
In this note we will propose some conjectures on the integrality properties related to
the  variation of Mahler measures,
inspired by the results in the elliptic curve case in \cite{RV, Sti, Zag}.
More precisely,
we will identify a quantity $Q(z)$ associated with the variation of Mahler measures
with the local mirror map,
and make some conjectures about the integrality properties of its expression
in term of the mirror parameter $q(z)$
and vice versa.
Some examples are presented.

\section{variations of Mahler Measures, Periods,  Picard-Fuchs Equations and Mirror Maps}

\subsection{One-parameter deformations of Fermat type hypersurfaces in weighted projective spaces}

The geometric objects we will study are deformations of Fermat type Calabi-Yau hypersurfaces of the form:
\be
x_1^{k_1} + \cdots + x_n^{k_n} - k\psi x_1 \cdots x_n = 0
\ee
in a weighted projective space $\bP^{n-1}_{w_1, \dots, w_n}$.
Here $k_1 \leq  \dots \leq k_n$ are positive integers such that
\be \label{eqn:Sum=1}
\frac{1}{k_1} + \cdots + \frac{1}{k_n} = 1,
\ee
$k$ is the least common multiplier of $k_1, \dots, k_n$,
and $w_1 = k/k_1, \dots, w_n = k/k_n$.

For each $n$,
there are only finitely many solutions to \eqref{eqn:Sum=1}.
They can be found by the following search algorithm:
First $k_1$ is bounded between $2$ and $n$,
we search in a reversed order for $k_1$ in this range;
for fixed $k_1$, $k_2$ has the following bound:
$$k_1 \leq k_2 \leq \frac{n-1}{1-\frac{1}{k_1}},$$
we search for $k_2$ in reversed order in this range;
for fixed $k_1, k_2$,
$k_3$ has the following bound:
$$k_2 \leq k_3 \leq \frac{n-2}{1-\frac{1}{k_1}-\frac{1}{k_2}},$$
we search for $k_3$ in reversed order in this range,
and so on.
This algorithm can be easily implemented by a computer algebra system \footnote{The author thanks Dr. Fei Yang
for providing us the Maple codes that implements the search algorithm.}.
The following are the results for $n =2,3,4,5$.
For $n=2$, there is only one solution: $(2,2)$;
For $n=3$, there are $3$ solutions: $(3,3,3)$, $(2,4,4)$, $(2,3,6)$.
For $n=4$, there are $13$ solutions: $(4,4,4,4)$, $(3,4,4,6)$, $(3,3,4,12)$,
$(2,6,6,6)$, $(2,5,5,10)$,
$(2,4,8,8)$, $(2,4,6,12)$, $(2,4,5,20)$,
$(2,3,12,12)$, $(2,3,10,15)$, $(2,3,9,18)$,
$(2,3,8,24)$, $(2,3,7,42)$.
For $n=5$, there are $147$ solutions:

\begin{tabular}{cccc}
$(5,5,5,5,5)$, & $(4,4,6,6,6)$, & $(4,4,5,5,10)$, & $(4,4,4,8,8)$, \\
$(4,4,4,6,12)$,& $(4,4,4,5,20)$, & $(3,6,6,6,6)$, & $(3,5,5,6,10)$, \\
$(3,5,5,5,15)$,& $(3,4,6,8,8)$, & $(3,4,6,6,12)$, & $(3,4,5,6,20)$, \\
$(3,4,5,5,60)$,& $(3,4,4,12,12)$,& $(3,4,4,10,15)$,& $(3,4,4,9,18)$, \\
$(3,4,4,8,24)$, & $(3,4,4,7,42)$, &$(3,3,9,9,9)$, & $(3,3,8,8,12)$, \\
$(3,3,7,7,21)$, &$(3,3,6,12,12)$,& $(3,3,6,10,15)$,& $(3,3,6,9,18)$, \\
$(3,3,6,8,24)$, &$(3,3,6,7,42)$,&$(3,3,5,15,15)$,& $(3,3,5,12,20)$, \\
$(3,3,5,10,30)$, &$(3,3,5,9,45)$,& $(3,3,5,8,120)$,& $(3, 3, 4, 24, 24)$, \\
$(3, 3, 4, 21, 28)$, & $(3,3,4,20,30)$, & $(3,3,4,18,36)$, & $(3,3,4,16,48)$, \\
$(3,3,4,15,60)$, & $(3,3,4,14,84)$, & $(3, 3, 4, 13, 156)$, & $(2,8,8,8,8)$, \\
$(2,7,7,7,14)$,& $(2,6,9,9,9)$, & $(2,6,8,8,12)$, & $(2,6,7,7,21)$,\\
$(2,6,6,12,12)$,& $(2,6,6,10,15)$, &$(2,6,6,9,18)$,& $(2,6,6,8,24)$, \\
$(2,6,6,7,42)$, & $(2,5,10,10,10)$, &$(2,5,8,8,20)$,& $(2,5,7,7,70)$, \\
$(2,5,6,15,15)$, &$(2,5,6,12,20)$, &$(2,5,6,10,30)$, & $(2,5,6,9,45)$, \\
$(2,5,6,8,120)$, &$(2,5,5,20,20)$,& $(2,5,5,15,30)$, & $(2,5,5,14,35)$, \\
$(2,5,5,12,60)$, & $(2, 5, 5, 11, 110)$, & $(2,4,12,12,12)$,& $(2,4,10,12,15)$,\\
$(2,4,10,10,20)$, & $(2,4,9,12,18)$, & $(2,4,9,9,36)$, &$(2,4,8,16,16)$, \\
$(2,4,8,12,24)$, & $(2,4,8,10,40)$, & $(2,4,8,9,72)$, &$(2,4,7,14,28)$, \\
$(2,4,7,12,42)$, & $(2,4,7,10,140)$, & $(2,4,6,24,24)$,& $(2,4,6,21,28)$, \\
$(2,4,6,20,30)$, & $(2,4,6,18,36)$, & $(2,4,6,16,48)$,& $(2,4,6,15,60)$, \\
$(2,4,6,14,84)$, & $(2,4,5,13,156)$, & $(2,4,5,40,40)$,& $(2,4,5,36,45)$, \\
$(2,4,5,30,60)$, & $(2,4,5,28,70)$, & $(2,4,5,25,100)$,& $(2,4,5,24,120)$, \\
$(2,4,5,22,220)$,& $(2,4,5,21,420)$, & $(2,3,18,18,18)$, &$(2,3,16,16,24)$, \\
$(2,3,15,20,20)$, & $(2,3,15,15,30)$, & $(2,3,14,21,21)$, &$(2,3,14,15,35)$, \\
$(2,3,14,14,42)$, &$(2,3,13,13,78)$, & $(2,3,12,24,24)$, &$(2,3,12,21,28)$, \\
$(2,3,12,20,30)$, & $(2,3,12,18,36)$, & $(2,3,12,16,48)$, & $(2,3,12,15,60)$, \\
$(2,3,12,14,84)$,& $(2,3,12,13,156)$, & $(2,3,11,22,33)$, & $(2,3,11,15,110)$, \\
$(2,3,11,14,231)$,& $(2,3,10,30,30)$,& $(2,3,10,24,40)$, & $(2,3,10,20,60)$, \\
$(2,3,10,18,90)$, &$(2,3,10,16,240)$, &$(2,3,9,36,36)$, & $(2,3,9,30,45)$, \\
$(2,3,9,27,54)$, &  $(2,3,9,24,72)$, & $(2,3,9,22,99)$, & $(2,3,9,21,126)$, \\
$(2,3,9,20,180)$, & $(2,3,9,19,342)$,& $(2,3,8,48,48)$, & $(2,3,8,42,56)$, \\
$(2,3,8,40,60)$, & $(2,3,8,36,72)$,  & $(2,3,8,33,88)$, & $(2,3,8,32,96)$,  \\
$(2,3,8,30,120)$, &  $(2,3,8,28,168)$,& $(2,3,8,27,216)$, & $(2,3,8,26,312)$,  \\
$(2,3,8,25,600)$,& $(2,3,7,84,84)$,& $(2,3,7,78,91)$, & $(2,3,7,70,105)$, \\
$(2,3,7,63,126)$,& $(2,3,7,60,140)$,& $(2,3,7,56,168)$, & $(2,3,7,54,189)$, \\
$(2,3,7,51,238)$,&$(2,3,7,49,294)$, &$(2,3,7,48,336)$, & $(2,3,7,46,483)$, \\
 $(2,3,7,45,630)$, &$(2,3,7,44,924)$, &$(2,3,7,43,1806)$.
\end{tabular}

\noindent When $n=6$,
there are $3462$ solutions,
e.g. $(2,7,43,1807,3263442)$.

There are two ways to count the number of solutions to \eqref{eqn:Sum=1} for each $n$.
The first is a simple count,
i.e.,
each solution is counted as $1$.
The second is a weighted count,
i.e., each solution is counted as $1$ over the order of its automorphism group.
By an automorphism of a solution $(k_1, \dots, k_n)$,
we mean a permutation $\sigma \in S_n$ such that
$k_{\sigma(i)} = k_i$ for all $i=1, \dots, n$.
It is interesting to study these counting problems.

\subsection{Variations of Mahler measures}

Given a solution $(k_1, \dots, k_n)$ to \eqref{eqn:Sum=1},
let $k$ be the least common multiplier of $k_1, \dots, k_n$.
Consider a weighted homogeneous polynomial of the form
$ k \psi \prod_{i=1}^n x_i - P(x_1, \dots, x_n)$,
where
$$
P(x_1, \dots, x_n):= \sum_{i=1}^n x_i^{k_i}
$$
with $\psi$ a complex parameter.
This is a weighted homogeneous polynomial of degree
$$k_1 w_1 = \cdots = k_n w_n = w_1 + \cdots + w_n = k,$$
it defines a Calabi-Yau hypersurface $X_\psi$ in the weighted projective space
$\bP^{n-1}_{\bw}$, where $\bw = (w_1, \dots, w_n)$.
For $\bep=(\epsilon_1, \dots, \epsilon_{n-1}) \in \bR_+^{n-1}$,
consider the following $(n-1)$-cycle $C_\bep$ in $\bP^{n-1}_\bw$ defined by:
\be
|x_1|= \epsilon_1, \dots, |x_{n-1}| = \epsilon_{n-1}, x_n = 1.
\ee
Consider the following integral over this cycle:
\be \label{eqn:Q}
\tilde{M} :=
\exp\left(-\frac{1}{(2\pi i)^{n-1}}\oint_{C_{\bep}}
\log (\psi-\frac{P(x_1,\dots, x_{n-1},1)}{k x_1 \cdots x_{n-1}})\,
\frac{dx_1}{x_1} \cdots \frac{dx_{n-1}}{x_{n-1}}\right).
\ee
Recall the \emph{logarithmic Mahler measure} $m(F)$ and the \emph{Mahler measure} $M(F)$ of a Laurent polynomial $F(x_1,\dots, x_{n-1})$ with complex coefficients are:
\begin{eqnarray}
m (F)&:=&\frac{1}{(2\pi i)^{n-1}}\oint\cdots \oint_{|x_1|=\epsilon_1, \dots, |x_{n-1}|=\epsilon_{n-1}}
\log |F|\,
\prod_{i=1}^{n-1} \frac{dx_i}{x_i},\\
M(F)&:=&\exp (m(F))\;.
\end{eqnarray}
One then finds
\be
M(F_\psi)=|\tilde{M}|^{-1},
\ee
where $F_\psi(x_1, \dots, x_{n-1}) = \psi-\frac{P(x_1,\dots, x_{n-1},1)}{k x_1 \cdots x_{n-1}}$.
In the case of elliptic curves \cite{RV},
the Mahler measure is related to the special values of the L-function associated to $X_\psi$
by Beilinson Conjectures.
Similar relationship is expected in higher dimensions.

By taking expansion in $\xi = \psi^{-1}$,
one gets from \eqref{eqn:Q}:
\ben
\tilde{M} &=& \xi \exp \biggl( \sum_{m=1}^\infty
\frac{\xi^m}{mk^m} \frac{1}{(2\pi i)^2} \oint_{C_\bep}
\frac{(\sum_{i=1}^{n-1} x_i^{k_i}+1)^m}{(x_1 \cdots x_{n-1})^m}\, \prod_{j=1}^{n-1} \frac{dx_j}{x_j}
\biggr).
\een
Thus
\be
\tilde{M} =\xi \exp\left(\sum_{m=1}^\infty c_m\frac{\xi^m}{m k^m}\right)
\ee
with $c_m$ the coefficient of  $x_1^m \cdots x_{n-1}^m$ in $(\sum_{i=1}^{n-1}x_i^{k_i}+1)^m$.
In particular,
$Q$ is independent of the choices of $\epsilon_1, \dots, \epsilon_{n-1}$.
By multinomial formula,
one easily gets:
\be
c_m = \begin{cases}
\frac{m!}{\prod_{i=1}^n (m/k_i)!}, & k|m, \\
0, & \text{otherwise}.
\end{cases}
\ee
Let $Q = \tilde{M}^k/k^k$, and $z = \xi^k/k^k$,
then we have
\be \label{def:Q}
Q = z \exp \biggl( \sum_{m=1}^\infty \frac{(km)!}{\prod_{i=1}^n (w_im)!} \frac{z^m}{m} \biggr).
\ee

\subsection{Periods and Picard-Fuchs equations}

Let $\theta = z\frac{d}{d z}$.
Differentiating \eqref{eqn:Q} and \eqref{def:Q} one finds
\bea
\theta \log Q
&=&\frac{\psi}{(2\pi i)^{n-1}} \oint_{C_\epsilon}
\frac{dx_1 \cdots dx_{n-1}}{k\psi x_1 \cdots x_{n-1}- P(x_1, \dots, x_{n-1},1)}\\
& = & \sum_{m=0}^\infty \frac{(km)!}{\prod_{i=1}^n (w_im)!} z^m.
\eea
Thus $\theta \log Q$ is a period of a family $\omega_\psi$ of holomorphic forms on $X_\psi$.

Write
\be
\alpha_m := \frac{(km)!}{\prod_{i=1}^n (w_im)!}
= \frac{k^{km} \prod_{j=0}^{km-1} (m - \frac{j}{k})}{\prod_{i=1}^n [w_i^{w_i} \prod_{j=0}^{w_im-1} (m - \frac{j}{w_i})]},
\ee
Then we have
\be
\frac{\alpha_{m}}{\alpha_{m-1}}
= \frac{k^k \prod_{j=0}^{k-1} (m - \frac{j}{k})}{\prod_{i=1}^n [w_i^{w_i} \prod_{j=0}^{w_i-1} (m - \frac{j}{w_i})]}
= \frac{k^k}{\prod_{i=1}^n w_i^{w_i}} \prod_{j=1}^l \frac{m - 1 + a_j}{m -b_j},
\ee
where in the second equality we remove the common factors of the numerator and the denominator.
The equality
\be \label{eqn:AB1}
\frac{k^k}{\prod_{i=1}^n w_i^{w_i}} \prod_{j=1}^l \frac{m - 1 + a_j}{m -b_j}
= \frac{(km)!}{(k(m-1))!} \cdot \prod_{i=1}^n \frac{(w_i(m-1))!}{(w_im)!}
\ee
and
\be \label{eqn:AB2}
\sum_{j=1}^l (\frac{1}{m-1+a_j} - \frac{1}{m-b_j})
= \sum_{j=0}^{k-1} \frac{1}{m-\frac{j}{k}} - \sum_{i=1}^n \sum_{j=0}^{w_i-1} \frac{1}{m - \frac{j}{w_i}}
\ee
will be of use below.

The recursion relation
\be
\prod_{j=1}^l (m - b_j) \alpha_m = \frac{k^k}{\prod_{i=1}^n w_i^{w_i}} \cdot \prod_{j=1}^l (m-1+a_j) \alpha_{m-1}
\ee
is equivalent to the following Picard-Fuchs equation:
\be \label{eqn:PF1}
\prod_{j=1}^l (\theta - b_j) \Phi = z \frac{k^k}{\prod_{i=1}^n w_i^{w_i}} \cdot \prod_{j=1}^l (\theta+a_j) \Phi
\ee
satisfied by $\theta \log Q$.

The recursion relation
\be
\frac{\alpha_{m}}{\alpha_{m-1}}
= \frac{k^k \prod_{j=0}^{k-1} (m - \frac{j}{k})}{\prod_{i=1}^n [w_i^{w_i} \prod_{j=0}^{w_i-1} (m - \frac{j}{w_i})]}
\ee
can also be rewritten as
\be
\prod_{i=1}^n \prod_{j=0}^{w_i-1} (m - \frac{j}{w_i}) \cdot \alpha_m
= \frac{k^k}{\prod_{i=1}^n w_i^{w_i}} \cdot  \prod_{j=0}^{k-1} (m - 1 + 1 - \frac{j}{k}) \alpha_{m-1}.
\ee
It is equivalent to the Picard-Fuchs equation:
\be \label{eqn:PF3}
\prod_{i=1}^n \prod_{j=0}^{w_i-1} (\theta - \frac{j}{w_i}) \cdot \Phi
= z \frac{k^k}{\prod_{i=1}^n w_i^{w_i}} \cdot  \prod_{j=0}^{k-1} (\theta  + 1 - \frac{j}{k}) \Phi.
\ee

In some cases one has
\be
\frac{\alpha_m}{\alpha_{m-1}}
= \frac{k^{k}}{\prod_{i=1}^n w_i^{w_i}}  \frac{\prod_{j=1}^{n-1} (m-1+a_j)}{m^{n-1}},
\ee
where $\{a_1, \dots, a_{n-1}\}$ is obtained from the set
$\{ \frac{1}{k}, \frac{2}{k}, \dots, \frac{k-1}{k}\}$
by removing integral multiples of $\frac{1}{k_i} = \frac{w_i}{k}$, where $w_i > 1$.
In this case
the Picard-Fuchs equation takes the following form:
\be \label{eqn:PF2}
\theta^{n-1} \Phi = \frac{k^kz}{\prod_{i=1}^n w_i^{w_i}} \prod_{j=1}^{n-1} (\theta + a_j) \Phi.
\ee
This happens if and only if $(w_i, w_j) = 1$ for $i \neq j$.

For $n=2$,
the only case $(k_1, k_2) =(2, 2)$ has this property.
The Picard-Fuchs equation is
\be
\theta \Phi - z (\theta+\half) \Phi = 0.
\ee
For $n=3$,
all cases of solutions to \eqref{eqn:Sum=1} has this property.
The Picard-Fuchs operators are:
\bea
&& \theta^2 - z (\theta+\frac{1}{3})(\theta+\frac{2}{3}), \qquad (k_1, k_2, k_3) = (3,3,3), \\
&& \theta^2 - z (\theta+\frac{1}{4})(\theta+\frac{3}{4}), \qquad (k_1, k_2, k_3) = (2,2,4), \\
&& \theta^2 - z (\theta+\frac{1}{6})(\theta+\frac{5}{6}), \qquad (k_1, k_2, k_3) = (2,3,6).
\eea
For $n=4$, we have the following cases:
\bea
&& \theta^3 - z (\theta+\frac{1}{4})(\theta+\frac{2}{4})(\theta+\frac{3}{4}), \qquad (k_1, k_2, k_3,k_4) = (4,4,4,4), \\
&& \theta^3 - z (\theta+\frac{1}{6})(\theta+\frac{3}{6})(\theta+\frac{5}{6}), \qquad (k_1, k_2, k_3,k_4) = (2,6,6,6).
\eea
For $n=5$ we have the following cases:
\bea
&& \theta^4 - z (\theta+\frac{1}{5})(\theta+\frac{2}{5})(\theta+\frac{3}{5})(\theta+\frac{4}{5}), \qquad \vec{k} = (5,5,5,5,5), \\
&& \theta^4 - z (\theta+\frac{1}{6})(\theta+\frac{2}{6})(\theta+\frac{4}{4})(\theta+\frac{5}{6}), \qquad \vec{k} = (3,6,6,6,6), \\
&& \theta^4 - z (\theta+\frac{1}{8})(\theta+\frac{3}{8})(\theta+\frac{5}{8})(\theta+\frac{7}{8}), \qquad \vec{k} = (2,8,8,8,8), \\
&& \theta^4 - z (\theta+\frac{1}{10})(\theta+\frac{3}{10})(\theta+\frac{7}{10})(\theta+\frac{9}{10}), \qquad \vec{k} = (2,5,10,10,10).
\eea

\subsection{Logarithmic solutions and mirror maps}

Equation \eqref{eqn:PF1} has a solution of logarithmic behavior:
\be
g_1(z) = g_0(z) \cdot \log z + h(z),
\ee
where $g_0(z) = \theta \log Q = \sum_{m \geq 0} \frac{(km)!}{\prod_{i=1}^n (w_im)!} z^m$ and
$h(z) = \sum_{m \geq 1} \gamma_m z^m$.
Rewrite \eqref{eqn:PF2} as
\bea
&& \prod_{j=1}^l (\theta-b_j) h(z) = \frac{k^k}{\prod_{i=1}^n w_i^{w_i}} z \prod_{j=1}^l (\theta + a_j) h(z) \\
&& +  \frac{k^k}{\prod_{i=1}^n w_i^{w_i}} z \sum_{i=1}^l \frac{\prod_{j=1}^l (\theta+a_j)}{\theta + a_i}  g_0(z)
 - \sum_{i=1}^l \frac{\prod_{j=1}^l (\theta-b_j)}{\theta-b_i} g_0(z). \nonumber
\eea
This is equivalent to the following initial value
\be
\gamma_1  = \sum_{i=1}^n (\frac{1}{a_i} - \frac{1}{1-b_i}) \frac{k!}{\prod_{i=1}^n w_i!}
\ee
and recursion relation:
\bea
&& \prod_{j=1}^l (m-b_j) \cdot \gamma_m
= \frac{k^k}{\prod_{i=1}^n w_i^{w_i}}  \prod_{j=1}^l (m -1 + a_j) \cdot \gamma_{m-1} \\
&& - \sum_{i=1}^l \frac{\prod_{j=1}^l (m-b_j)}{m-b_i} \frac{(km)!}{\prod_{i=1}^n (w_im)!} \nonumber \\
&& +  \frac{k^k}{\prod_{i=1}^n w_i^{w_i}}  \sum_{i=1}^l \frac{\prod_{j=1}^l (m-1+a_j)}{m-1-a_i} \frac{k(m-1))!}{\prod_{i=1}^n (w_i(m-1))!}. \nonumber
\eea
Dividing both sides by $\prod_{j=1}^l (m-b_j)$ and making use of \eqref{eqn:AB1} and \eqref{eqn:AB2},
one gets
\bea
&& \gamma_m = \frac{(km)!}{(k(m-1))!} \cdot \prod_{i=1}^n \frac{(w_i(m-1))!}{(w_im)!} \cdot \gamma_{m-1} \\
&& + \sum_{i=1}^l (\frac{1}{m-1+a_i} - \frac{1}{m-b_i}) \frac{(km)!}{\prod_{i=1}^n (w_im)!}. \nonumber
\eea
The solution is given by
\bea
\gamma_m
& = & \sum_{j=1}^m \sum_{i=1}^l (\frac{1}{j-1+a_i} - \frac{1}{j-b_i}) \cdot \frac{(km)!}{\prod_{i=1}^n (w_im)!} \\
& = & \sum_{j=1}^m (\sum_{a=0}^{k-1} \frac{1}{j-\frac{a}{k}}
- \sum_{i=1}^n \sum_{a=0}^{w_i-1} \frac{1}{j - \frac{a}{w_i}}) \cdot \frac{(km)!}{\prod_{i=1}^n (w_im)!}.
\eea
One can also derive this solution from \eqref{eqn:PF3}.

The \emph{mirror map} is defined by
\be
q\: :=\:\exp\left(\frac{g_1(z)}{g_0(z)}\right) = z \exp (h(z)/g_0(z)).
\ee

\subsection{A related Picard-Fuchs system and its mirror map}
In this section we will relate $Q$ to the mirror map of the following Picard-Fuchs equation related to
\eqref{eqn:PF3}:
\be \label{eqn:PF4}
\prod_{i=1}^n \prod_{j=0}^{w_i-1} (\theta - \frac{j}{w_i}) \cdot \Phi
= z \frac{k^k}{\prod_{i=1}^n w_i^{w_i}} \cdot  \prod_{j=0}^{k-1} (\theta  + \frac{j}{k}) \Phi.
\ee
Clear $\Phi = 1$ is a solution,
and we have  the following logarithmic solution:
\be
\Phi_1 = \log z + \sum_{m=1}^\infty \frac{(km)!}{\prod_{i=1}^n (w_im)!} \frac{z^m}{m}.
\ee
The corresponding mirror map is defined by
$$Q = e^{\Phi_1}.$$
Note this is exactly the map $Q$ defined in \eqref{def:Q}.

For example,
when $(w_1,w_2,w_3)=(1,1,1)$,
this is the Picard-Fuch system associated with the local $\bP^2$ geometry \cite{CKYZ},
i.e. the canonical line bundle $\kappa_{\bP^2}$.
In general,
the Picard-Fuchs system \eqref{eqn:PF4} is associated with the local Calabi-Yau geometry
of $\kappa_{\bP^{n-1}_{w_1, \dots, w_n}}$.
Hence we will refer to the mirror map $Q$ as the local mirror map.

\section{Integrality Properties of Variation of Mahler Measures}

It is expected that $z$, $g_0(z)$, $\frac{d}{dq} \log Q$ are modular forms
for the monodromy group of the Picard-Fuchs equation,
and often they can be expressed in terms of usual modular forms.
See \cite{RV, Sti, Zag} for examples in the elliptic curve case.
Our conjectures below are inspired by the results in these papers.
We focus on the integrality properties in this paper
and leave the modular properties to future investigations.

We have
$q = z e^{h(z)/g_0(z)}$ and $Q = z e^{f_n(z)}$,
where
$$f_n(z) =  \sum_{m=1}^\infty
\frac{(mk)!}{\prod_{i=1}^n (w_im)!} \frac{z^m}{m}.$$

\begin{proposition}
One has $q, Q \in z+ z\bZ[[z]]$.
\end{proposition}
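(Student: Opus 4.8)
The plan is to prove the integrality of $Q$ and $q$ separately, since each has a different structure. For $Q = z e^{f_n(z)}$, the key observation is that the coefficients $\frac{(mk)!}{\prod_{i=1}^n (w_i m)!}$ appearing in $f_n(z)$ are the multinomial-type integers $c_{km}$ computed earlier in the excerpt, and in particular they are nonnegative integers for every $m$. I would first recall that $f_n(z) = \sum_{m\geq 1} \alpha_m \frac{z^m}{m}$ with each $\alpha_m \in \bZ_{>0}$, so the problem reduces to showing that exponentiating a power series of the form $\sum_{m\geq 1}\alpha_m \frac{z^m}{m}$ with integer coefficients $\alpha_m$ produces a series in $\bZ[[z]]$.

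The main tool for this is a standard integrality criterion for exponentials: a series $\exp\bigl(\sum_{m\geq 1}\alpha_m \frac{z^m}{m}\bigr)$ lies in $\bZ[[z]]$ if and only if the $\alpha_m$ satisfy the Gauss--Dwork congruences, namely $\alpha_{p^r s} \equiv \alpha_{p^{r-1}s} \pmod{p^r}$ for all primes $p$ and all $r,s\geq 1$. The strategy, then, is to verify these congruences for $\alpha_m = \frac{(mk)!}{\prod_{i=1}^n (w_i m)!}$. The clean way to do this is to exhibit $\exp(f_n(z))$ directly as an integral series via a combinatorial or $p$-adic identity: since $\alpha_m$ counts the coefficient of $x_1^m\cdots x_{n-1}^m$ in $(\sum x_i^{k_i}+1)^m$, one expects $\exp(f_n)$ to have a generating-function interpretation with manifestly integral coefficients. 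Concretely, I would try to identify $Q$ with a quotient or product of factorial generating series whose integrality is transparent, or else apply the Dieudonné--Dwork lemma: a series $g(z)\in 1+z\bQ[[z]]$ lies in $1+z\bZ_p[[z]]$ if and only if $g(z)^p/g(z^p) \in 1 + pz\bZ_p[[z]]$, which reduces everything to a congruence on the $\alpha_m$ modulo powers of $p$.

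For the integrality of $q = z e^{h(z)/g_0(z)}$, the approach is different and relies on the relation to $Q$ together with the Picard--Fuchs structure. Here $g_0(z) = \theta\log Q = \sum_{m\geq 0}\alpha_m z^m$ has constant term $\alpha_0 = 1$, so $g_0 \in 1 + z\bZ[[z]]$ is invertible in $\bZ[[z]]$; the delicate point is that $h(z)/g_0(z)$ need not have integral coefficients term by term, yet its exponential must. I would invoke the general integrality theorem for mirror maps of hypergeometric-type Picard--Fuchs equations (the Lian--Yau / Krattenthaler--Rivoal / Delaygue integrality of mirror maps, cited in the introduction as \cite{Lia-Yau1, Lia-Yau2, KR1, KR2, KR3, Del}), which applies precisely to operators of the form \eqref{eqn:PF4} whose local exponents are the rationals $\{j/w_i\}$ and $\{j/k\}$; these references establish that the canonical coordinate $q$ lies in $z + z\bZ[[z]]$ for exactly this class of equations. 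The substantive step is checking that our $a_j,b_j$ fit the hypotheses of one of these theorems, which amounts to verifying an interlacing condition on the two multisets of exponents.

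The step I expect to be the main obstacle is the verification of the Gauss--Dwork congruences (equivalently the Dieudonné--Dwork criterion) for the ratios $\alpha_{p^r s}/\alpha_{p^{r-1}s}$ of the ``complete intersection of factorials'' $\frac{(mk)!}{\prod_i (w_im)!}$: controlling the $p$-adic valuation of such factorial quotients requires a careful Legendre/Kummer-formula analysis of carries, and ensuring the congruences hold uniformly in $p$ and $r$ is where the real work lies. For $q$ the analogous difficulty is hidden inside the cited mirror-map theorems, so the burden there is matching hypotheses rather than proving congruences from scratch.
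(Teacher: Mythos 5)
Your plan points at the right toolbox---Delaygue's criterion for $q$ and a Dwork-type integrality statement for $Q$ (the paper instead cites the stronger result $(z^{-1}Q)^{1/k}\in 1+z\bZ[[z]]$ from \cite{Zho})---but in both halves you defer exactly the step that carries the mathematical content, so as written this is a roadmap rather than a proof. For $q$, the hypothesis of Delaygue's theorem is not an abstract ``interlacing condition'' to be looked up: it is the concrete inequality $[kx]-\sum_{i=1}^n[w_ix]\geq 1$ for all $x\in[\tfrac1k,1)$, and verifying it is the entire proof in the paper. The verification is short but uses the structure of the data in an essential way: since $\sum_i w_i=k$ one gets $[kx]-\sum_i[w_ix]\geq 0$ everywhere; the step function only jumps at the points $j/k$, so it suffices to rule out equality at $x=j/k$ for $1\leq j\leq k-1$; and equality there would force $w_ij/k\in\bZ$ for every $i$, i.e.\ $k_i\mid j$ for all $i$, hence $k=\mathrm{lcm}(k_1,\dots,k_n)\leq j$, a contradiction. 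Without this argument (or an equivalent one) the claim $q\in z+z\bZ[[z]]$ is not established, and note that the hypothesis $k=\mathrm{lcm}(k_i)$ is exactly what makes it work---so ``matching hypotheses'' cannot be waved through.

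For $Q$, you propose to prove the Gauss--Dwork congruences $\alpha_{p^rs}\equiv\alpha_{p^{r-1}s}\pmod{p^r}$ for $\alpha_m=\frac{(mk)!}{\prod_i(w_im)!}$ from scratch and explicitly flag this as ``where the real work lies'' without carrying it out. That route can be made to work (it is essentially how the cited integrality results are proved), but as submitted it is an unfilled gap; the paper avoids it entirely by quoting \cite{Zho}. So the two genuinely missing pieces are: the Landau-function inequality underlying Delaygue's criterion, and any actual argument (or citation doing the work) for the integrality of $\exp(f_n)$.
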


\begin{proof}
By a result in \cite{Zho},
we have $(z^{-1}Q)^{1/k} \in 1+ z\bZ[[z]]$.
By the main result in \cite{Del},
to see $q \in z+ z\bZ[[z]]$ one has to show that
\be
[kx] -\sum_{i=1}^n [w_ix] \geq 1
\ee
for $x \in [\frac{1}{k}, 1)$,
where $[x]$ means the integral part of $x$,
i.e.,
$[x]$ is an integer such that $[x] \leq x < [x]+1$,
with equality if and only if $x \in \bZ$.
Therefore,
\be
\sum_{i=1}^n [w_ix] \leq \sum_{i=1}^n w_i x = k x,
\ee
with equality if and only if
$w_ix \in \bZ$ for all $i=1, \dots, n$.
Therefore,
one has
\be
[kx] - \sum_{i=1}^n [w_ix] \geq 0
\ee
for all $x$.
This function is right continuous and jumps at $j/k$, $j=1, \dots, k-1$.
So it suffices to check
\be
j - \sum_{i=1}^n [w_ij/k] > 0
\ee
for all $j=1, \dots, k-1$.
If $\sum_{i=1}^n [w_ij/k] = j$ for some $j=1, \dots, k-1$,
then we have $$w_ij/k =a_i$$ for some integer $a_i$ for all $i=1, \dots, n$.
This means
\be
j = a_i \frac{k}{w_i} = a_i k_i,
\ee
i.e., $j$ is a common multiplier of $k_1, \dots, k_n$,
hence $j \geq k$.
A contradiction.
\end{proof}

\begin{Conjecture}
We have $(z^{-1}q)^{1/k} \in \bZ[[z]]$.
\end{Conjecture}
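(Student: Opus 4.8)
The plan is to localize at each prime and to show that the only primes that matter are those dividing $k$. Write $z^{-1}q=1+zG$ with $G\in\bZ[[z]]$, which is legitimate by the Proposition. For a prime $p\nmid k$ we have $1/k\in\bZ_p$, so every generalized binomial coefficient $\binom{1/k}{j}$ lies in $\bZ_p$, and hence
\be
(z^{-1}q)^{1/k}=\sum_{j\ge 0}\binom{1/k}{j}(zG)^j\in\bZ_p[[z]].
\ee
Since the coefficients of $(z^{-1}q)^{1/k}$ are a priori rational and $\bZ=\bigcap_p\bZ_p$ inside $\bQ$, it suffices to prove that $(z^{-1}q)^{1/k}\in\bZ_p[[z]]$ for each prime $p$ dividing $k$.

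Fix such a $p$ and set $L(z):=h(z)/g_0(z)=\log(q/z)$, so that $(z^{-1}q)^{1/k}=\exp\bigl(\tfrac1k L(z)\bigr)$. By the Dieudonn\'e--Dwork lemma this series lies in $1+z\bZ_p[[z]]$ if and only if $\tfrac1k\bigl(pL(z)-L(z^p)\bigr)\in pz\,\bZ_p[[z]]$, that is, if and only if $pL(z)-L(z^p)\in p^{\,1+v_p(k)}z\,\bZ_p[[z]]$, where $v_p$ is the $p$-adic valuation. The integrality $q\in z+z\bZ[[z]]$ proved in the Proposition is, by the same lemma, equivalent to the weaker congruence $pL(z)-L(z^p)\in pz\,\bZ_p[[z]]$. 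Thus the entire content of the Conjecture is the gain of the extra factor $p^{\,v_p(k)}$ in this congruence for the log-mirror-map $L$.

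To attack this supercongruence I would use the explicit coefficients. A telescoping of the double-sum formula for $\gamma_m$ in the excerpt gives $h(z)=\sum_m\gamma_m z^m$ with $\gamma_m=\bigl(k\,\mathcal{H}_{km}-\sum_{i=1}^n w_i\,\mathcal{H}_{w_i m}\bigr)\alpha_m$, where $\mathcal{H}_N=\sum_{\ell=1}^N 1/\ell$ and $\alpha_m=(km)!/\prod_i(w_im)!$, while $g_0(z)=\sum_m\alpha_m z^m$. The weighted harmonic combination $k\mathcal{H}_{km}-\sum_i w_i\mathcal{H}_{w_im}$ is built so that its most singular $p$-adic parts cancel, and the $p$-adic valuation of the factorial ratio $\alpha_m$ is governed by the Landau function $\Delta(x)=[kx]-\sum_i[w_i x]$, shown in the Proposition to be $\ge 1$ on $[1/k,1)$. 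The plan is to run the system of Dwork congruences for the ratio-of-factorials sequence $\alpha_m$ (in the form underlying the criterion of \cite{Del}), which simultaneously controls $g_0(z)/g_0(z^p)$ and the numerator $h(z)$ modulo powers of $p$, and to read off the extra $v_p(k)$ powers from the correlated divisibilities of $k$ and the $w_i$.

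An alternative, possibly cleaner, route is to transfer the supercongruence from the companion map $Q$. Since $(z^{-1}Q)^{1/k}\in 1+z\bZ[[z]]$ by \cite{Zho}, the Dieudonn\'e--Dwork lemma already yields $p f_n(z)-f_n(z^p)\in p^{\,1+v_p(k)}z\,\bZ_p[[z]]$ for the quantity $f_n=\log(Q/z)$, which is the exact analogue of what is needed for $L$. Because the Picard--Fuchs equations \eqref{eqn:PF1}--\eqref{eqn:PF2} and \eqref{eqn:PF4} governing $q$ and $Q$ differ only by the reflection $a_j\leftrightarrow 1-a_j$ of the numerator exponents, one expects $(q/Q)^{1/k}$ to be integral as well (this is confirmed in low order in the examples), so that proving $p\bigl(L-f_n\bigr)-\bigl(L-f_n\bigr)(z^p)\in p^{\,1+v_p(k)}z\,\bZ_p[[z]]$ would finish the argument. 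In either approach the main obstacle is precisely this gain of $v_p(k)$ extra powers of $p$ over the plain Dwork congruence that already secures integrality of $q$: this is where the weighted structure $\sum_i w_i=k$ with $w_i\mid k$ must be fed in, and I expect it to be the genuinely hard $p$-adic step.
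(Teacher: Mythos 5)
The statement you are addressing is stated in the paper as a \emph{Conjecture}, and the paper supplies no proof of it: the neighbouring Proposition only establishes the weaker fact $q \in z + z\bZ[[z]]$, by combining the result of \cite{Zho} for $Q$ with Delaygue's criterion \cite{Del} applied to the step function $[kx]-\sum_i[w_ix]$. Your proposal does not close this gap; it is an honest and essentially correct \emph{reduction}, but not a proof. The parts that do work are (i) the observation that for $p\nmid k$ the series $(1+zG)^{1/k}=\sum_j\binom{1/k}{j}(zG)^j$ lies in $\bZ_p[[z]]$ automatically because $1/k\in\bZ_p$, and (ii) the Dieudonn\'e--Dwork translation of the claim, for $p\mid k$, into the supercongruence $pL(z)-L(z^p)\in p^{1+v_p(k)}z\,\bZ_p[[z]]$ for $L=h/g_0$, together with the correct remark that the Proposition only yields this modulo $p$ rather than modulo $p^{1+v_p(k)}$. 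Your telescoped formula $\gamma_m=\bigl(k\mathcal{H}_{km}-\sum_i w_i\mathcal{H}_{w_im}\bigr)\alpha_m$ is also correct.

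The gap is that the supercongruence itself --- the gain of the extra factor $p^{v_p(k)}$ --- is precisely the content of the conjecture, and you stop at the point of naming it as the genuinely hard $p$-adic step without carrying it out. The alternative route via $Q$ has the same defect: the integrality of $(z^{-1}Q)^{1/k}$ from \cite{Zho} does give the analogous supercongruence for $f_n$, but to transfer it to $L$ you would need $(q/Q)^{1/k}$ to be integral (equivalently, the supercongruence for $L-f_n$), and you offer only the observation that this is confirmed to low order in examples and is expected from the reflection $a_j\leftrightarrow 1-a_j$ of the Picard--Fuchs data. That expectation is itself an unproved claim of the same depth as the original conjecture. So your text should be read as a correct reformulation and a plausible plan of attack, consistent with the Dwork-congruence technology of \cite{Del} and \cite{KR2}, rather than as a proof; since the paper likewise leaves the statement open, there is no argument in the paper against which your missing step could be checked.
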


Using the Lagrange-Good inversion formula \cite{Good} as in  \cite{Zho}
one finds $z= \sum_{m=1}^\infty a_m q^m$ and $z = \sum_{m=1}^\infty A_m Q^m$,
where
\be \label{eqn:zinq}
a_m = \text{the coefficient of $z^{m-1}$ in} \; (1+ \theta (h(z)/g_0(z)) \cdot e^{-m h(z)/g_0(z)},
\ee
and
\be \label{eqn:zinQ}
A_m = \text{the coefficient of $z^{m-1}$ in} \; (1 + \theta f_n(z)) \cdot e^{-m f_n(z)}.
\ee
These coefficients are also integers,
i.e., $z \in q + q \bZ[[q]]$ and $z \in Q + Q \bZ[[Q]]$.
Now we have $Q=z+O(z^2)$ and $q= z + O(z^2)$,
so one can eliminate $z$ and use \eqref{eqn:zinq} and \eqref{eqn:zinQ} to
express $Q$ as a function of $q$ and vice versa.
It is easy to see that $Q \in q \bZ[[q]]$ and $q \in Q \bZ[[Q]]$.
Write
\be
g_0(z) = 1+ \sum_{m=0}^\infty c_m q^m =  1+ \sum_{m=0}^\infty C_m Q^m.
\ee
Then the coefficients $\{c_m\}_{m \geq 1}$ and $\{C_m\}_{m \geq 1}$  are integers.

Note
\be
q \frac{d}{dq} \log Q = z\frac{d}{dz} \log Q \cdot \frac{q}{z}\frac{dz}{dq}
= g_0(z) \cdot \frac{q}{z} \frac{dz}{dq}.
\ee
Because
\be
z \frac{d \log q}{dz}
= 1+ \theta( \frac{h(z)}{g_0(z)})
= 1+ \frac{h(z) \theta g_0(z) - g_0(z) \theta h(z)}{g_0(z)^2}.
\ee
Therefore,
\be
q \frac{d}{dq} \log Q = \frac{g_0(z)}{1+ \theta( \frac{h(z)}{g_0(z)})}
= \frac{g^3_0(z)}{g^2_0(z) + h(z) \theta g_0(z) - g_0(z) \theta h(z)}.
\ee
It follows that $q \frac{d}{dq} \log Q$ lies in $\bQ[[z]]$ hence in $\bQ[[q]]$.
Write
\be
q \frac{d}{dq} \log Q = 1+ \sum_{m=1}^\infty u_m q^m
\ee
and define
\be
b_m = - \frac{1}{m^2} \sum_{d|m} \mu(n/d) u_d
\ee
and
\be
\hat{b}_m = - \frac{1}{m^2} \sum_{d|m} \mu(n/d) (-1)^d u_d.
\ee
Equivalently,
\be
q\frac{d}{d q}\log Q=1-\sum_{m\geq 1} b_m\frac{m^2 q^m}{1- q^m}
=1-\sum_{m\geq 1} \hat{b}_m\frac{m^2 (-q)^m}{1- (-q)^m}.
\ee

\begin{Conjecture}
The numbers $b_m$ and $\hat{b}_m$ are \emph{integers} so that
\be
Q=q\prod_{m\geq 1} (1-q^m)^{mb_m}=q\prod_{m\geq 1} (1-(-q)^m)^{m\hat{b}_m}.
\ee
\end{Conjecture}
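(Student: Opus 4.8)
The plan is to show that the stated product formula is a purely formal consequence of the definition of the $b_m$, so that the entire content of the conjecture is the \emph{integrality} of these numbers, and then to prove that integrality by $p$-adic congruences coming from the Picard--Fuchs structure. First I would record the dictionary: taking $\log$ of $Q=q\prod_{m\geq1}(1-q^m)^{mb_m}$, applying $q\frac{d}{dq}$, and using $\frac{q^m}{1-q^m}=\sum_{j\geq1}q^{mj}$ gives
\[
q\frac{d}{dq}\log Q=1-\sum_{m\geq1}b_m\frac{m^2q^m}{1-q^m},
\]
whence $u_N=-\sum_{m\mid N}m^2b_m$ and, by M\"obius inversion, $b_N=-\frac{1}{N^2}\sum_{d\mid N}\mu(N/d)u_d$, exactly the definition in the statement; the $\hat b_m$ case is identical after $q\mapsto-q$, which replaces $u_d$ by $(-1)^du_d$. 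Thus the product holds as a formal identity for any choice of $b_m\in\bQ$, and the conjecture is the claim that these rationals lie in $\bZ$.

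Second, I would reduce the integrality to a local, prime-by-prime statement. Note that $u_m\in\bZ$ already: since $Q\in q+q\bZ[[q]]$ by the Proposition, we may write $Q=q\,U$ with $U\in1+q\bZ[[q]]$ a unit of $\bZ[[q]]$, so $q\frac{d}{dq}\log Q=1+q\frac{d}{dq}\log U\in1+q\bZ[[q]]$. What remains is the \emph{quadratic} divisibility $N^2\mid\sum_{d\mid N}\mu(N/d)u_d$, which by the Chinese Remainder Theorem can be checked one prime at a time. Writing $N=p^s\ell$ with $p\nmid\ell$ and keeping the nonzero M\"obius terms one finds
\[
\sum_{d\mid N}\mu(N/d)u_d=\sum_{\substack{e\mid\ell\\ \ell/e\ \mathrm{squarefree}}}\mu(\ell/e)\bigl(u_{p^se}-u_{p^{s-1}e}\bigr),
\]
so it suffices to prove the \emph{supercongruences} $u_{p^se}\equiv u_{p^{s-1}e}\pmod{p^{2s}}$ for every prime $p$, every $s\geq1$, and every $e$ prime to $p$.

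Third, I would establish these congruences from the Frobenius structure of the Picard--Fuchs operator \eqref{eqn:PF4}, following the method used for Gopakumar--Vafa/Ooguri--Vafa integrality of local Calabi--Yau geometries. The key input is that $g_0(z)=\theta\log Q$ is the holomorphic period and $\log Q$ the logarithmic period of the hypergeometric-type operator \eqref{eqn:PF4}, whose coefficients $\alpha_m=(km)!/\prod_i(w_im)!$ obey Dwork-type congruences; these were already exploited in \cite{Del, Zho} to obtain the first-order integrality $(z^{-1}Q)^{1/k}\in1+z\bZ[[z]]$ and $q\in z+z\bZ[[z]]$. I would upgrade them to the second-order congruences above by constructing the formal Frobenius on the two-dimensional space spanned by $1$ and $\log Q$ and tracking its $p$-adic expansion, in the spirit of Kontsevich--Schwarz--Vologodsky and of Krattenthaler--Rivoal \cite{KSV, KR1, KR2, KR3, Zud}. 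The alternating case $\hat b_m$ then follows by applying the same congruences to the signed coefficients $(-1)^du_d$, the sign intervening only through the parity of the odd part of $N$.

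The main obstacle is precisely the extra power of $p$: the integrality of the mirror map rests on Dwork congruences modulo $p^s$, which control divisibility by $N$ but not by $N^2$, whereas $b_m$ demands the sharper modulus $p^{2s}$. Producing this second factor of $p$---the feature distinguishing genuine BPS/instanton integrality from mere integrality of $z^{-1}q$---is the crux, and it requires careful control of the denominators introduced both by the ratio $h(z)/g_0(z)$ and by the Lagrange--Good inversion relating $z$ to $q$, so that no $p$-adic valuation is lost in passing from the $z$-expansion to the $q$-expansion.
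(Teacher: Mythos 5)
This statement is stated in the paper as a \emph{Conjecture}: the paper offers no proof of it, only numerical verification by a Maple computation in a list of examples. Your submission is likewise not a proof but a program, and the one step that carries all the difficulty is left unexecuted. Your first two stages are correct and essentially routine: the equivalence between the product formula and the definition $b_N=-\frac{1}{N^2}\sum_{d\mid N}\mu(N/d)u_d$ is indeed a formal logarithmic-derivative/M\"obius computation valid over $\bQ$; the integrality $u_m\in\bZ$ does follow from $Q\in q+q^2\bZ[[q]]$; and the reduction of $N^2\mid\sum_{d\mid N}\mu(N/d)u_d$ to the supercongruences $u_{p^se}\equiv u_{p^{s-1}e}\pmod{p^{2s}}$ is the standard reformulation. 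But none of this is the content of the conjecture.

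The genuine gap is your third stage. The Dwork-type congruences underlying \cite{Del, Zho, KR1, KR2, KR3, Zud} prove integrality of the mirror map itself, i.e.\ first-order statements; they do not by themselves produce the extra factor of $p^s$ needed for the quadratic divisibility, and you acknowledge as much in your final paragraph (``producing this second factor of $p$ \dots is the crux''). Invoking ``the formal Frobenius on the two-dimensional space spanned by $1$ and $\log Q$'' in the spirit of \cite{KSV} is a pointer to a method, not an argument: those techniques have been carried out rigorously only in restricted settings (e.g.\ Konishi's proof \cite{Kon} of Gopakumar--Vafa integrality for toric Calabi--Yau \emph{threefolds}), whereas the conjecture here concerns arbitrary $n$ and arbitrary weights $(w_1,\dots,w_n)$, where the analogous enumerative interpretation is, as the paper itself notes, ``not clear at present.'' Moreover the quantity $q\frac{d}{dq}\log Q$ involves the ratio $h(z)/g_0(z)$ and the inverse of the mirror map, and you give no mechanism for controlling the $p$-adic valuation through these operations beyond asserting that it ``requires careful control.'' A correct write-up should either supply that mechanism or state plainly that the integrality remains conjectural; as it stands, the proposal proves only the (easy) equivalence of the two formulations of the conjecture.
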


Similarly from
\be
Q \frac{d}{dQ} \log q = \frac{Q}{z} \frac{dz}{dQ} \cdot z\frac{d}{d z} \log q,
\ee
and
\be
\frac{z}{Q} \frac{dQ}{dz} = z \frac{d}{dz} \log Q = g_0(z)
\ee
we get:
\be
Q \frac{d}{dQ} \log q = \frac{1+ \theta( \frac{h(z)}{g_0(z)})}{g_0(z)}
= \frac{g^2_0(z) + h(z) \theta g_0(z) - g_0(z) \theta h(z)}{g^3_0(z)}.
\ee
It follows that $Q \frac{d}{dQ} \log q$ lies in $\bQ[[z]]$ hence in $\bQ[[Q]]$.
Write
\be
Q \frac{d}{dQ} \log q = 1+ \sum_{m=1}^\infty v_m Q^m
\ee
and define
\be
c_m = - \frac{1}{m^2} \sum_{d|m} \mu(n/d) v_d
\ee
and
\be
\hat{c}_m = - \frac{1}{m^2} \sum_{d|m} \mu(n/d) (-1)^d v_d.
\ee
Equivalently,
\be
Q\frac{d}{d Q}\log q=1-\sum_{m\geq 1} c_m\frac{m^2 Q^m}{1- Q^m}
=1-\sum_{m\geq 1} \hat{c}_m\frac{m^2 (-Q)^m}{1- (-Q)^m}.
\ee

\begin{Conjecture}
The numbers $c_m$ and $\hat{c}_m$ are \emph{integers} so that
\be
q=Q\prod_{m\geq 1} (1-Q^m)^{mc_m}=Q\prod_{m\geq 1} (1-(-Q)^m)^{m\hat{c}_m}.
\end{equation}
\end{Conjecture}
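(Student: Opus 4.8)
The plan is to split the statement into its formal part and its arithmetic part. The product formula is a purely formal identity: since $\frac{m^2 Q^m}{1-Q^m}=\sum_{j\geq 1}m^2 Q^{mj}$, applying $Q\frac{d}{dQ}\log$ to $Q\prod_{m\geq 1}(1-Q^m)^{mc_m}$ reproduces $1-\sum_{m\geq 1}c_m\frac{m^2 Q^m}{1-Q^m}$, and the stated definition $c_m=-\frac{1}{m^2}\sum_{d\mid m}\mu(m/d)v_d$ is exactly the M\"obius inversion of $v_N=-\sum_{m\mid N}m^2 c_m$. Hence $q=Q\prod_m(1-Q^m)^{mc_m}$ holds identically in $\bQ[[Q]]$ the moment the $c_m$ are defined (the coefficients $v_m$ are rational by the computation $Q\frac{d}{dQ}\log q=(1+\theta(h/g_0))/g_0$), and the same computation with $Q\mapsto -Q$ handles the $\hat c_m$ version. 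So the entire content of the conjecture is the integrality $c_m,\hat c_m\in\bZ$.

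First I would record an elementary reduction. The series $q/Q$ lies in $1+Q\bZ[[Q]]$, since $q\in Q\bZ[[Q]]$ and $q=Q+O(Q^2)$ are already established above. Every element of $1+Q\bZ[[Q]]$ has a unique factorization $\prod_{m\geq 1}(1-Q^m)^{E_m}$ with $E_m\in\bZ$ (solve recursively: the coefficient of $Q$ determines $E_1$, and so on), and likewise a unique factorization $\prod_{m\geq 1}(1-(-Q)^m)^{E'_m}$ with $E'_m\in\bZ$ in the alternating basis. Matching logarithmic derivatives as above gives $v_N=-\sum_{m\mid N}mE_m$, whence $NE_N=-\sum_{d\mid N}\mu(N/d)v_d=N^2 c_N$, so $E_m=mc_m$ and $E'_m=m\hat c_m$. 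Therefore $c_m\in\bZ$ (resp.\ $\hat c_m\in\bZ$) is equivalent to the divisibility $m\mid E_m$ (resp.\ $m\mid E'_m$). The ``necklace'' integrality $E_m\in\bZ$ is automatic from $q/Q\in 1+Q\bZ[[Q]]$; the genuine content is the extra factor of $m$.

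To obtain $m\mid E_m$ I would argue $p$-adically, proving $\ord_p(E_m)\geq\ord_p(m)$ for every prime $p$. Using $NE_N=-\sum_{d\mid N}\mu(N/d)v_d$, the divisibility $m\mid E_m$ for all $m$ is equivalent to a system of second-order congruences on the integers $v_m$, the simplest instances being $v_{pm}\equiv v_m\pmod{p^2}$ for $p\nmid m$ (and more generally $v_{p^sm}\equiv v_{p^{s-1}m}\pmod{p^{2s}}$). These are to be compared with the first-order congruences $v_{pm}\equiv v_m\pmod{p}$, which already encode the Lian--Yau integrality of the mirror map supplied by \cite{Del, Zho}. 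The congruences should be read off the explicit formula $Q\frac{d}{dQ}\log q=(1+\theta(h/g_0))/g_0$, in which $g_0$ is the holomorphic hypergeometric solution and $h$ the logarithmic solution of the Picard--Fuchs equation \eqref{eqn:PF4}. The natural engine is the Frobenius (unit-root) structure on the $D$-module defined by \eqref{eqn:PF4}, i.e.\ the $p$-adic method of Kontsevich--Schwarz--Vologodsky \cite{KSV}, from which Dwork-type congruences for such period ratios follow; the interpretation of $c_m$ as genus-zero local Gopakumar--Vafa invariants of $\kappa_{\bP^{n-1}_{\bw}}$ makes this the expected output.

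The hard part will be precisely the second-order congruences. The first-order version is already available through \cite{Del, Zho}, but upgrading the modulus from $p^s$ to $p^{2s}$ demands control not just of the unit root but of the full Frobenius matrix on the $(n-1)$-dimensional Picard--Fuchs local system, uniformly in the weighted data $(k_1,\dots,k_n)$. I would first treat the rank-one case $n=2$ and the coprime-weight cases where \eqref{eqn:PF2} holds, in which $g_0$ is a genuine ${}_{n-1}F_{n-2}$ and Dwork's congruences are classical, before attempting the general weighted case. The $\hat c_m$ statement follows by the identical argument carried out in the alternating basis $\{1-(-Q)^m\}$, and by the $q\leftrightarrow Q$ symmetry the same machinery should yield the companion Conjecture for $b_m,\hat b_m$; note, however, that one cannot simply invert Conjecture~3, since compositional inversion of a $\prod(1-q^m)^{mb_m}$ expansion need not preserve the divisibility. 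A secondary subtlety to verify is the exact combinatorial equivalence, for composite $m$ with several prime factors, between ``$c_m\in\bZ$ for all $m$'' and the displayed prime-power congruences; this is standard but not entirely automatic Witt/necklace bookkeeping.
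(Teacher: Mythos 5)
The statement you are addressing is presented in the paper as a \emph{Conjecture}: the author gives no proof, only Maple verification in a handful of examples, so there is no argument in the paper to compare yours against, and the question is simply whether your proposal closes the gap. Its formal half is correct and worth keeping: applying $Q\frac{d}{dQ}\log$ to the product shows the displayed identity is equivalent, via M\"obius inversion of $v_N=-\sum_{m\mid N}m^2c_m$, to the paper's definition of $c_m$; and your reduction of $c_m\in\bZ$ to the divisibility $m\mid E_m$, where $E_m\in\bZ$ are the (automatically integral) necklace exponents of $q/Q\in 1+Q\bZ[[Q]]$, is clean. Equivalently, one must show $\sum_{d\mid m}\mu(m/d)v_d\equiv 0\pmod{m^2}$, whereas the integrality of $q/Q$ only gives this modulo $m$.

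However, the proposal does not prove the conjecture: the entire arithmetic content --- the congruences $v_{p^su}\equiv v_{p^{s-1}u}\pmod{p^{2s}}$ for $p\nmid u$ --- is exactly where you stop. You name machinery (unit-root/Frobenius structure, the $p$-adic method of \cite{KSV}, Dwork congruences) that might produce them, but none of it is carried out, and you yourself label this ``the hard part.'' Two further cautions. First, the mod $p$ congruences you invoke from \cite{Del, Zho} concern the integrality of $q$ and $Q$ as series in $z$; the quantity $v_m$ comes from $Q\frac{d}{dQ}\log q=\bigl(1+\theta(h/g_0)\bigr)/g_0$, which compares the two mirror maps against each other, and even the first-order congruence for $v_m$ does not follow from those references without an argument. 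Second, the appeal to an interpretation of $c_m$ as genus-zero local Gopakumar--Vafa invariants is not available here: the paper explicitly says the enumerative meaning of these numbers is not clear at present, and in this conjecture the roles of $q$ and $Q$ are the reverse of the standard local mirror symmetry normalization (it is $q$ that is being expanded in the local mirror coordinate $Q$). What you have is a correct formal reduction plus a plausible research program, not a proof.
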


We have written a Maple algorithm to automate the calculations of the numbers $b_m, \hat{b}_m c_m, \hat{c}_m$
and verify their integrality in various cases.
Some results are presented in the following sections.

\section{Examples}

\subsection{The $n=2$ case}

There is only one possibility:
\be
x_1^2+x_2^2=2\psi x_1x_2.
\ee
Geometrically,
$X_\psi$ is just two points in $\bP^1$.
The Picard-Fuchs operator is given by
\be
L = \theta - 2^2 z (\theta+\frac{1}{2}),
\ee
where $z= (2\psi)^{-2}$, $\theta = z \frac{\pd}{\pd z}$.
It follows that
\bea
&& g_0(z) = \sum_{m=0}^\infty \frac{(2m)!}{(m!)^2} z^m = \frac{1}{\sqrt{1-4z}}, \\
&& g_1(z) = \log z \cdot \sum_{m=0}^\infty \frac{(2m)!}{(m!)^2} z^m
+ \sum_{m=1}^\infty \frac{(2m)!}{(m!)^2}
\cdot \sum_{k=1}^m (\frac{1}{k-1/2} - \frac{1}{k}) \cdot z^m, \\
&& Q(z) = z \exp  \sum_{m=1}^\infty \frac{(2m)!}{(m!)^2} \frac{z^m}{m}
= \frac{4z}{(1+\sqrt{1-4z})^2}.
\eea
From the last equality one easily finds
\be
z= \frac{Q}{(1+Q)^2},
\ee
and so
\be
g_0(z) = \frac{1+Q}{1-Q}.
\ee
Our Maple algorithm indicates that
\be
Q = q.
\ee
I.e.,
\be
\sum_{m=1}^\infty \frac{(2m)!}{(m!)^2} \frac{z^m}{m}
\cdot  \sum_{m=0}^\infty \frac{(2m)!}{(m!)^2} z^m
=  \sum_{m=1}^\infty \frac{(2m)!}{(m!)^2}
\cdot \sum_{k=1}^m (\frac{1}{k-1/2} - \frac{1}{k}) \cdot z^m,
\ee
or equivalently, for $m \geq 1$,
\be
\sum_{a=1}^m \frac{1}{a} \binom{2a}{a} \cdot \binom{2m-2a}{m-a}
= \binom{2m}{m} \sum_{k=1}^m (\frac{1}{k-1/2} - \frac{1}{k}).
\ee
This does not seem to be easy to establish.
Another equivalent formulation is
\be
\sum_{m=1}^\infty \frac{(2m)!}{(m!)^2}
\cdot \sum_{k=1}^m (\frac{1}{k-1/2} - \frac{1}{k}) \cdot z^m
= \frac{1}{\sqrt{1-4z}} \log \frac{4}{(1+\sqrt{1-4z})^2}.
\ee
This does not seem to be easy to establish either.

\subsection{The $n=3$ case}

There are $3$ possibilities,
corresponding to elliptic curves in weighted projective planes.
They have been studied in \cite{RV, Sti, Zag},
which are the source of inspirations of this work.
For
\be
x_1^3+x_2^3+x_3^3=3\psi x_1x_2x_3
\ee
we have
\begin{center}
\begin{tabular}{c|c|c|c|c|c}
$m$ & $b_m$ & $\hat{b}_m$ & $c_m$ & $\hat{c}_m$ & $\hat{c}_m/m$   \\
\hline
1 &  9 &  -9 & -9 & 9 & 9\\
2 & -9 &  -9/2 & -63/2 & -36 & -18 \\
3 & 0  & 0 & -243 & 243 & 81 \\
4 & 9  & 9 & -2304 & -2304 & -576 \\
5 & -9 & 9 & -25425 & 25425 & 5085 \\
6 & 0  & 0 & -614061/2 & -307152 & -51192 \\
7 & 9  & -9 & -3957534 & 3957534 & 565362 \\
8 & -9 & -9 & -53475840 & -5347840 & -6684480 \\
9 & 0  & 0 & -749220273 & 749220273 & 83246697 \\
10 & 9 & 9/2 & -21600703575/2 & -10800364500 & -1080036450
\end{tabular}
\end{center}

For the elliptic curve
\be
x_1^2+x_2^4+x_3^4=4\psi x_1x_2x_3
\ee
\begin{center}
\begin{tabular}{c|c|c}
$m$ & $b_m$ & $\hat{b}_m$  \\
\hline
1 &  28 & -28  \\
2 & -134 & -120  \\
3 & 996 & -996  \\
4 & -10720 & -10720  \\
5 & 139292 & -139292  \\
6 & -2019450 & -2018952 \\
7 & 31545316 & -31545316 \\
8 & -520076672 & -520076672  \\
9 & 8930941980 & -8930941980 \\
10 & -158342776966 & -158342707320
\end{tabular}
\end{center}

\begin{center}
\begin{tabular}{c|c|c|c|c}
$m$ & $c_m$ & $c_m/m$ & $\hat{c}_m$ & $\hat{c}_m/m$   \\
\hline
1 & -28 & -28 & 28 &  28 \\
2 & -258 & -129 & -272 & -136 \\
3 & -4860 & -1620 & 4860 & 1620 \\
4 & -116864 & -29216 & -116864 & -29216 \\
5 & -3259600 & -651920 & 3259600 & 651920 \\
6 & -99763218 & -16627203 &  -99765648 & -16627608 \\
7 & -3256509228 & -465215604 & 3256509228 & 465215604 \\
8 & -111422514176 & -13927814272 & -111422514176 & -13927814272 \\
9 & -3951764383896 & -439084931544 & 3951764383896 & 439084931544 \\
10 & -144178140979800 & -14417814097980 & -144178142609600 & -14417814260960
\end{tabular}
\end{center}

For the elliptic curve
\be
x_1^2+x_2^3+x_3^6=6\psi x_1x_2x_3
\ee
\begin{center}
\begin{tabular}{c|c|c}
$m$ & $b_m$ & $\hat{b}_m$  \\
\hline
1 & 252 & -252  \\
2 & -13374 & -13248  \\
3 & 1253124 & -1253124 \\
4 & -151978752 & -151978752 \\
5 & 21255487740 & -21255487740  \\
6 & -3255937602498 & -3255936975936 \\
7 & 531216722607876 & -531216722607876 \\
8 & -90773367805541376 & -90773367805541376  \\
9 & 16069733941012586748 & -16069733941012586748  \\
 10 & -2925411405456230806590 & -2925411405445603062720
\end{tabular}
\end{center}

\begin{center}
\begin{tabular}{c|c|c}
$m$ & $b_m/m$ & $\hat{b}_m/m$ \\
\hline
1 & 252 & -252 \\
2 & -6687 & -6624 \\
3 & 417708 & -417708 \\
4 & -37994688 & -37994688 \\
5 & 4251097548 & -4251097548 \\
6 & -542656267083 & -542656162656 \\
7 & 531216722607876/7 & -531216722607876/7\\
8 & -11346670975692672 & -11346670975692672 \\
9 &  1785525993445842972 & -1785525993445842972 \\
10 & -292541140545623080659 & -292541140544560306272
\end{tabular}
\end{center}

\begin{center}
\begin{tabular}{c|c|c}
$m$ & $c_m$ & $\hat{c}_m$  \\
\hline
1 & -252 & 252 \\
2 & -18378 & -18504  \\
3 &  -2545884 & 2545884 \\
4 & -457060032 & -457060032 \\
5 & -94790322000 & 94790322000 \\
6 & -21537521398170 & -21537522671112 \\
7 & -5211710079116940 & 5211710079116940 \\
8 & -1320613559984014848 & -1320613559984014848  \\
9 & -346614112277503632216 & 346614112277503632216 \\
10 & -93531635843711988483000 & -93531635843759383644000
\end{tabular}
\end{center}

\begin{center}
\begin{tabular}{c|c|c}
$m$ & $c_m/m$ & $\hat{c}_m/m$   \\
\hline
1 & -252 & 252 \\
2 & -9189 & -9252 \\
3 & -848628 & 848628 \\
4 & -114265008 & -114265008 \\
5 & -18958064400 & 18958064400 \\
6 & -3589586899695 & -3589587111852 \\
7 & -744530011302420 & 744530011302420 \\
8 & -165076694998001856 & -165076694998001856 \\
9 & -38512679141944848024 & 38512679141944848024 \\
10 &-9353163584371198848300 & -9353163584375938364400
\end{tabular}
\end{center}

\subsection{The $n=4$ case}
For the K3 surface
\be
x_1^4+\cdots +x_4^4=4\psi x_1 \cdots x_4
\ee
we have

\begin{center}
\begin{tabular}{c|c|c|c|c}
$m$ & $b_m$ & $\hat{b}_m$ & $b_m/m$ & $\hat{b}_m/m$ \\
\hline
1 & 80 & -80 & 80 & -80 \\
2 & 80 & 120 & 40 & 60 \\
3 & 240 & -240 & 80 & -80 \\
4 & 160 & 160 & 40 & 40 \\
5 & 400 & -400 & 80 & -80 \\
6 & 240 & 360 & 40 & 60 \\
7 & 560 & -560 & 80 & -80 \\
8 & 320 & 320 & 40 & 40 \\
9 & 720 & - 720 & 80 & -80 \\
10 & 400 & 600 & 40 & 60
\end{tabular}
\end{center}

\begin{center}
\begin{tabular}{c|c|c}
$m$ & $c_m$ & $\hat{c}_m$  \\
\hline
1 & -80 & 80 \\
2 & -3280 & -3320 \\
3 & -272240 &  272240 \\
4 & -29945760 & -29945760 \\
5 & -3860155600 & 3860155600 \\
6 & -550279367920 & -550279504040 \\
7 & -84101456589360 & 84101456589360 \\
8 & -13526805760545600 & -13526805760545600 \\
9 & -2262255520889560560 & 2262255520889560560 \\
10 & -390188833066192395600 & -390188833068122473400
\end{tabular}
\end{center}

\begin{center}
\begin{tabular}{c|c|c}
$m$ & $c_m/m$ & $\hat{c}_m/m$   \\
\hline
1 & -80 & 80 \\
2 & -1640 & -1660 \\
3 &-272240/3 & 272240/3 \\
4 & -7486440  & -7486440 \\
5 & -772031120 & 772031120 \\
6 & -275139683960/3 &  -275139752020/3 \\
7 & -12014493798480 & 12014493798480 \\
8 & -1690850720068200 & -1690850720068200 \\
9 & -754085173629853520/3 & 754085173629853520/3 \\
10 &-39018883306619239560 & -39018883306812247340
\end{tabular}
\end{center}

We have also verify the case of
\be
x_1^4+x_2^3+x_2^3+x_4^2-12\psi x_1 \cdots x_4 = 0.
\ee
It turns out that $b_m/m$, $\hat{b}_m/m$, $c_m/m$ and $\hat{c}_m/m$ are all integers.
The numbers are too large to reproduce here.
For example,
$$b_5= 31088578606413096899258654040.$$

\subsection{The $n=5$ case}

For the case of
\be
x_1^5+\cdots +x_5^5=5\psi x_1 \cdots x_5
\ee
we have checked that  $b_m$, $\hat{b}_m$, $c_m$ and $\hat{c}_m$
are all integers divisible by $5$, e.g.,
$$b_5=25050301099750,$$
but not all $b_m/m$, $\hat{b}_m/m$, $c_m/m$ and $\hat{c}_m/m$ are integers.
For example,
$$b_7/7=31249534645239703150/7.$$

We have also checked the case of
\be
x_1^3+x_2^3+x_3^2+x_4^2+x_5^2=12\psi x_1 \cdots x_5
\ee
The numbers $b_m/m$, $\hat{b}_m/m$, $c_m/m$ and $\hat{c}_m/m$ are all integers.
For example,
$$\frac{b_6}{6}=-61961714940992690898780121741257228991904436.$$

\subsection{The $n>5$ cases}

We have also checked various $n> 5$ cases,
e.g.
the case of
\be
x_1^6+\cdots +x_6^6=6\psi x_1 \cdots x_6
\ee
and the case of
\be
x_1^7+\cdots +x_7^7=7\psi x_1 \cdots x_7.
\ee
We conjecture that all $b_m/m$, $\hat{b}_m/m$, $c_m/m$ and $\hat{c}_m$ are integers are divisible by
$n$
for the case of
\be
x_1^n + \cdots + x_n^n = n \psi x_1\cdots x_n.
\ee

\subsection{Discussions}

In this paper we have considered the variation of Mahler measures of some polynomials
and define a function $Q$.
We have identified $Q$ with the local mirror map of a related Picard-Fuchs system,
which corresponds to some local Calabi-Yau geometry.
Some conjectures are made about some integrality properties
of the expression of $Q$ in terms of $q$ and the expression of $q$ in terms of $Q$.
Their enumerative meaning is not clear at present.

In \cite{Sti}
Beauville's semistable families of elliptic curves over $\bP^1$ with four singular fibers were considered.
It is interesting to extend the discussion in this paper to semistable families of Calabi-Yau $n$-folds over $\bP^1$
for $n > 1$.
In this paper we have only considered hypergeometric series in one variable.
Another direction for extension is to consider multivariate hypergeometric series.
We hope to address these problems in subsequent research.

\vspace{.1in}
{\em Acknowledgements}.
This research is supported in part by NSFC grants (10425101 and 10631050)
and a 973 project grant NKBRPC (2006cB805905).

\end{document}